\newcommand{\lbm}{\left[ \begin{matrix}}
\newcommand{\rem}{\end{matrix} \right]}
\theoremstyle{definition}
\newtheorem{definition}{Definition}[section]
\newtheorem{lemma}[definition]{Lemma}
\newtheorem{theorem}[definition]{Theorem}
\title{On the Problem of Resolvents}
\author{A Translation by Alexander J. Sutherland}
\date{\today}
\begin{document}

\maketitle

\vspace{16pt}

\section{Original Bibliographic Information}
I thank Ignat Soroko for helpful comments on the bibliographic information.

\subsection{Russian}
\begin{itemize}
	\item \textbf{Author:} \foreignlanguage{russian}{Г. Н. Чеботарев}
	\item \textbf{Title:} \foreignlanguage{russian}{К Пpoблeмe Рeзoльвeнт}
	\item \textbf{Year:} 1954
	\item \textbf{Language:} \foreignlanguage{russian}{русский}
	\item \textbf{Publisher:} \foreignlanguage{russian}{Учeныe Записки Казанского Государствeнного Унивeрситeта им. В. И. Ульянова-Лeнина}
\end{itemize}

\subsection{English}
\begin{itemize}
	\item \textbf{Author:} G.N. Chebotarev
	\item \textbf{Title:} On the Problem of Resolvents
	\item \textbf{Year:} 1954
	\item \textbf{Language:} Russian
	\item \textbf{Publisher:} Scientific Proceedings of the V.I. Ulyanov-Lenin Kazan State University
\end{itemize}

\vfill

\begin{center}
This work was supported by the National Science Foundation under Grant No. DMS-1944862.
\end{center}

\newpage
\section{The Translation} 

\subsection*{On the Problem of Resolvents}
\footnote{
Translator's Note: This is a translation of the original mathematics. In particular, errors in the text have not been fixed. The errors in question come from considering intersections in affine spaces instead of in projective spaces.} 
Consider an equation of $n^{th}$ degree
\begin{equation}
	f(x) = x^n + a_1x^{n-1} + \cdots + a_n = 0
\end{equation}

\noindent
whose coefficients $a_1,\dotsc,a_n$ are indeterminates. Substituting
\begin{align*}
	y = t_0 + t_1x + \cdots + t_{n-2}x^{n-2} + x^{n-1}
\end{align*}

\noindent
into $f(x)=0$ yields an $n^{th}$ degree equation
\begin{equation}
	y^n + C_1y^{n-1} + C_2y^{n-2} + \cdots + C_n = 0
\end{equation}

\noindent
where the coefficients $C_1,\dotsc,C_n$ depend rationally on $a_1,\dotsc,a_n$ and are polynomials in $t_0,\dotsc,t_{n-2}$ of respective degrees $1,\dotsc,n$. \\

Equation (2) is called an \textit{$s$-parameter resolvent} of equation (1) if its coefficients $C_1,\dotsc,C_n$ are rational functions of $s$ parameters $v_1,\dotsc,v_s$ and the coefficients $t_0,\dotsc,t_{n-2}$ of the Tschirnhaus transformation depend rationally on $a_1,\dotsc,a_n$ and the roots of some auxiliary equations (secondary resolvents), which themselves admit $s$-parameter resolvents. \\

It is easy to show that if one does not limit the degree of the secondary resolvents, the $s$-parameter resolvent of equation (1) can be put in the particular form
\begin{equation}
	y^n + C_{n-s}y^s + C_{n-s+1}y^{s-1} + \cdots + C_{n-1}y + 1 = 0.
\end{equation}

In fact, let
\begin{align*}
	g(z) = z^n + B_1z^{n-1} + \cdots + B_n = 0
\end{align*}

\noindent
be an $s$-parameter resolvent of equation (1), i.e. $B_1,\dotsc,B_n$ are rational functions $s$ of parameters $v_1,\dotsc,v_s$. \\

Take the new Tschirnhaus transformation
\begin{align*}
	y = \tau_0 + \tau_1z + \cdots + \tau_{n-2}z^{n-2} + z^{n-1}.
\end{align*}

The coefficients $C_1,\dotsc,C_n$ of the equation that $y$ satisfies are polynomials of the corresponding degree in the $\tau_0,\dotsc,\tau_{n-2}$ and, moreover, depend rationally on the coefficients $B_1,\dotsc,B_n$ and therefore on the parameters $v_1,\dotsc,v_s$. Setting $C_1,\dotsc,C_{n-s-1}$ equal to zero and $C_n$ equal to 1 and composing the results of these equations (in which $\tau_0,\dotsc,\tau_{n-2}$ are unknown), we obtain a chain of auxiliary equations whose coefficients depend on $s$ parameters. \\

D. Hilbert (1) showed in his article "On Equations of the Ninth Degree" that an equation of the ninth degree admits a resolvent that depends on 4 parameters. His method of obtaining this resolvent is as follows. \\

The coefficients $t_0,\dotsc,t_{n-2}$ of the Tschirnhaus transformation are considered as the coordinates of a point in the space $T$ taking values from the field of rational functions in $a_1,\dotsc,a_n$ and its algebraic extensions. The equations $C_1=0, C_2=0, C_3=0$ and $C_4=0$ determine hypersurfaces in the space $T$ of degrees 1, 2, 3, and 4, respectively. Finding a 4-parameter resolvent of an equation of $9^{th}$ degree reduces to finding a common point of these hypersurfaces by solving a chain of algebraic equations that admit $\leq 4$-parameter resolvents. Substituting $y = \sqrt[n]{C_n} z$ makes the final term a unit. \\

This problem is solved by Hilbert as follows. A three-dimensional hyperplane is found that entirely belongs to the hypersurfaces $C_1=0, C_2=0$. On it, the surface $C_3=0$ cuts out a cubic surface, which, as you know, always contains lines that lie entirely on it; to find these lines, one has to solve an equation of the 27th degree which depends only on four parameters, since the equation of the cubic surface allows a special technique based on the subtle properties of cubic quarternary forms (by summing up five cubes that are the roots of one equation of fifth-degree) which leads to a form that depends on 4 parameters. The intersection of the line just found with the hypersurface $C_4=0$ determines the desired point. Thus, to construct a 4-parameter resolvent of an equation of the $9^{th}$ degree, in addition to a series of equations of degree between 2 and 5, it is necessary to solve an equation of the $27^{th}$ degree, which is greater than the degree of the original equation. \\

In the work "On the Application of Tschirnhaus Transformations to the Reduction of Algebraic Equations" (2), A. Wiman show that to obtain an $(n-5)$-parameter resolvent of an equation of degree $n \geq 10$, it is sufficient to solve several auxiliary one-parameter equations of degree no higher than 4.  To do this, he moves the origin to an intersection point of the hypersurfaces $C_1=0, C_2=0, C_3=0$ and reduces the last two equations to the form
\begin{align*}
	0 &= C_2 = \phi_2\\
	0 &= C_3 = \psi_2 + \psi_3
\end{align*}

\noindent
where $\phi_2$ and $\psi_2$ are quadratic homogeneous forms and $\psi_3$ is a cubic homogeneous form, and using elegant geometric considerations, searches for a two-dimensional plane that lies entirely in both the hypercones $\phi_2=0$ and $\psi_2=0$. The intersection of this plane with the cubic cone $\psi_3=0$ gives a straight line belonging to the surfaces $C_1=0$, $C_2=0$, $C_3=0$. \\

For the case $n=9$, Wiman proves that in order to obtain a 4-parameter resolvent, it is sufficient to only solve one auxiliary equation of 5th degree (which has a one-parameter resolvent) in addition to the the equations of degrees 1-4. To do this, he performs a linear transformation (which is determined by solving an equation of fifth degree) which simultaneously diagonalizes the forms $\phi_2=0$ and $\psi_2=0$ and defines a one-parameter family of two-dimensional cones, all points of which belong to both cones $\phi_2=0$ and $\psi_2=0$.  By solving an equation of fourth degree, we find the value of the parameter at which the cone of the family splits into a pair of planes. \\

Applying the method of Wiman, we can find an $(n-6)$-parameter resolvent of an equation of degree $n \geq 77$. In this article, an attempt is made to slightly modify this method, as a result of which, the $(n-6)$-parameter resolvent can be constructed for equations of degree $n \geq 21$. \\

Following Wiman, we consider the space $T_{n-1}$ of the parameters $t_0,\dotsc,t_{n-2}$ and the hypersurfaces $C_1=0, C_2=0, C_3=0, C_4=0, C_5=0$ in this space. We move the origin to a point common to the hypersurfaces $C_1=0, C_2=0, C_3=0$, which can be determined by  solving auxiliary equations of the second and third degree. Now, in the equations of the first three surfaces, the free terms disappear and these equations can be written as follows:
\begin{align*}
	0 &= C_1\\
	0 &= C_2 = \phi_1 + \phi_2\\
	0 &= C_3 = \psi_1 + \psi_2 + \psi_3
\end{align*}

\noindent
where $C_1, \phi_1, \psi_1$ are linear forms of the parameters $t_i$, $\psi_2, \phi_2$ are quadratic [forms], and $\psi_3$ is a cubic. The equations $\phi_1=0$ and $\psi_1=0$ determine the hyperplanes tangent to the hypersurfaces $C_2=0$ and $C_3=0$ at the origin. The intersection of these hyperplanes with the hyperplane $C_1=0$ determines the space $T_{n-4}$, in which the equations of the hypersurfaces cut out by $C_2=0$ and $C_3=0$ will have the form
\begin{align*}
	0 &= C_2' = \phi_2'\\
	0 &= C_3' = \psi_2' + \psi_3'.
\end{align*}

We show that for $n \geq 19$, there exists a two-dimensional plane belonging entirely to the hypersurfaces $C_2'=0$ and $C_3'=0$ in the space $T_{n-4}$. 

\begin{lemma}
	Two $(3k-1)$-dimensional quadratic cones with a common vertex in $3k$-dimensional space share a whole $k$-dimensional plane passing through the vertex of the cones. 
\end{lemma}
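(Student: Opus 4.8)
The plan is to restate the lemma in linear-algebraic terms and then induct on $k$. Writing the two cones as the zero loci of homogeneous quadratic forms $Q_1, Q_2$ on a vector space $V$ of dimension $3k$, with the common vertex at the origin, a ``$k$-dimensional plane through the vertex lying on both cones'' is exactly a $k$-dimensional linear subspace $W \subseteq V$ on which both forms vanish identically, i.e.\ a common totally isotropic $k$-plane. I would work over the algebraic closure of the ground field (harmless for the resolvent application, since passing to it only amounts to adjoining roots of the auxiliary equations the construction already permits) and assume the characteristic is not $2$, so that each $Q_i$ carries a well-defined polar symmetric bilinear form $b_i$ and the vanishing of $Q_i$ on a subspace forces $b_i$ to vanish there too. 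The base case $k = 0$ is trivial: the vertex itself is the required $0$-plane.

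For the inductive step, assume the statement for $k-1$ and let $\dim V = 3k$ with $k \geq 1$. First I would produce a single common isotropic line. Projectivizing, $\{Q_1 = 0\}$ and $\{Q_2 = 0\}$ are two hypersurfaces in $\PP^{3k-1}$, and since $3k - 1 \geq 2$ the projective dimension theorem forces their intersection to be nonempty; this yields a nonzero $v \in V$ with $Q_1(v) = Q_2(v) = 0$ (the degenerate case where some $Q_i$ vanishes identically only makes matters easier, since a single quadratic form in $3k$ variables already carries totally isotropic subspaces of dimension $\lfloor 3k/2\rfloor \geq k$). Next I would pass to the subspace $U = \{w \in V : b_1(v,w) = b_2(v,w) = 0\}$, the intersection of the two polar hyperplanes of $v$. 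As $U$ is cut out by at most two linear conditions, $\dim U \geq 3k - 2$, and $v \in U$ because $b_i(v,v) = 2Q_i(v) = 0$. By construction $v$ is isotropic and lies in the radical of each $b_i|_U$, so both forms descend to quadratic forms $\overline{Q_1}, \overline{Q_2}$ on the quotient $\overline{U} = U/\langle v\rangle$, of dimension at least $3k - 3 = 3(k-1)$. Restricting to any $3(k-1)$-dimensional subspace of $\overline{U}$ and invoking the inductive hypothesis, I obtain a common totally isotropic $(k-1)$-plane $\overline{W}$; its preimage $W \subseteq U$ under $U \to \overline{U}$ is then a $k$-dimensional subspace containing $v$, and a one-line check (using that changing a representative by a multiple of $v$ alters neither $Q_i$) shows $Q_1|_W = Q_2|_W = 0$, closing the induction.

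The crux, and the reason the hypothesis reads ``$3k$-dimensional space,'' is the exact dimension bookkeeping in the reduction: imposing the two polarity conditions $b_1(v,\cdot)=b_2(v,\cdot)=0$ costs at most two dimensions, and quotienting by the isotropic line $\langle v\rangle$ costs one more, for a total drop of exactly $3$, precisely what is needed to convert ambient dimension $3k$ into $3(k-1)$. The point demanding genuine care is the descent: I must intersect \emph{both} polar hyperplanes, not just one, since only then does $v$ lie in the radical of $b_i|_U$ for each $i$ simultaneously, making $\overline{Q_1}$ and $\overline{Q_2}$ both well defined on $\overline{U}$. The remaining steps—existence of the common line and the check that the pulled-back plane is totally isotropic—are routine once this setup is in place; the only hypotheses I lean on are algebraic closedness (for the common line) and characteristic $\neq 2$ (so that $Q_i|_W \equiv 0$ yields $b_i|_W \equiv 0$, i.e.\ genuine total isotropy).
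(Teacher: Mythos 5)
Your proof is correct and is essentially the paper's argument in linear-algebraic dress: your common isotropic vector $v$ is the paper's common straight-line generator, your two polar hyperplanes $b_i(v,\cdot)=0$ are its tangent hyperplanes at the common point, and your quotient $U/\langle v\rangle$ of dimension $3(k-1)$ is its $(3k-3)$-dimensional space carrying two cones with common vertex, to which the induction is applied. The only substantive difference is that the paper tracks the degree of the auxiliary equation needed to find the common generator (a quartic), which matters for the resolvent application but not for the lemma as stated.
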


\begin{proof}
	We proceed by induction. Find straight-line generators common to both cones (for this, it is enough to intersect both cones with any two-dimensional plane that does not pass through their vertex, find the intersection point of the two quadrics cut by the cones on the plane - which requires solving an equation of fourth degree - and connect the found point to the vertex of the cones). Take a plane of dimension $3k-1$ that does not pass through vertex of the cones. On this hyperplane, our cones will cut out two hypersurfaces of degree 2, the common point of which is rationally defined as the intersection of the hyperplane and the previously found common [straight-line] generator of the cones. \\
	
	The intersection of the [original] hyperplane and the two [tangent] hyperplanes touching these hypersurfaces at their common point defines a space of $3k-3$ dimensions, in which out cones cut out a pair of cones of $3k-4$ dimensions with a common vertex, which contains a generic $k-1$ dimensional linear space by the inductive hypothesis. The desired $k$-dimensional subspace common to both cones is defined as the space passing through the $(k-1)$-dimensional space and the vertex of the cones.
\end{proof}

\begin{lemma}
	A cubic four-dimensional cone in five-dimensional space contains a two-dimensional plane passing through the top of the cone which lies entirely in the cone. 
\end{lemma}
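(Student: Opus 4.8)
The plan is to pass to the associated projective picture and there reduce the statement to the classical fact that a cubic threefold contains a line. Projectivizing the $5$-dimensional space turns the cubic cone $\psi_3=0$ into a cubic hypersurface $X=\{\psi_3=0\}\subset\PP^4$, a $2$-dimensional plane through the vertex becomes a projective line $\ell\subset\PP^4$, and ``the plane lies in the cone'' becomes $\ell\subset X$. So it suffices to exhibit a line on the cubic threefold $X$; the affine cone over that line is the desired $2$-plane, and being a linear subspace it passes through the vertex automatically. In view of the translator's note, I would run every intersection count in projective space so that no solutions are lost at infinity.

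First I would produce a point of $X$ and localize there. Intersecting $X$ with a general line of $\PP^4$ and solving the resulting cubic yields a point $[p]\in X$, which for a generic (hence smooth) $\psi_3$ is a smooth point. Choosing coordinates with $[p]=[1:0:\cdots:0]$ and using $\psi_3(p)=0$, the cubic form has no $x_0^3$ term, so
\begin{equation*}
	\psi_3 = x_0^2\,L(x_1,\dotsc,x_4) + x_0\,Q(x_1,\dotsc,x_4) + K(x_1,\dotsc,x_4),
\end{equation*}
with $L$ linear, $Q$ quadratic, and $K$ cubic. A line through $[p]$ with direction $[a_1:\cdots:a_4]$ lies on $X$ exactly when $\psi_3$ vanishes identically along it, i.e. when $L(a)=0$, $Q(a)=0$, and $K(a)=0$ simultaneously.

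Then I would solve this system geometrically. Since $[p]$ is smooth, $L\neq 0$, so $L(a)=0$ cuts out a $\PP^2$ of directions (the projectivized tangent hyperplane); on this $\PP^2$ the conditions $Q=0$ and $K=0$ define a conic and a cubic curve. By B\'ezout these meet in $2\cdot 3 = 6$ points over the algebraic closure, each of which spans a line through $[p]$ lying on $X$. In the language of resolvents this is one auxiliary cubic (to find $[p]$) followed by the degree-$6$ equation resolving the conic--cubic intersection; any one of the six lines gives, via its affine cone, the required $2$-dimensional plane.

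I expect the only real difficulty to be the degenerate configurations: one must know that $X$ has a smooth point (so that $L\neq 0$ and the $\PP^2$ is genuine) and that $Q=0$ and $K=0$ share no common component. In the setting of the paper the forms $C_i$, and hence $\psi_3$, are as general as the indeterminates $a_1,\dotsc,a_n$ allow, so $X$ is smooth and the intersection is proper and the six lines are distinct; in the remaining special cases the two curves share a component, which only produces a positive-dimensional family of lines and so makes the conclusion easier. (Alternatively, one could slice the $5$-dimensional space by a general hyperplane through the vertex to obtain a cubic surface and invoke the classical $27$ lines recalled in the introduction, but this replaces the degree-$6$ step by a degree-$27$ one and so is worse for the resolvent bookkeeping.)
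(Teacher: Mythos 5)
Your proposal is correct and is essentially the paper's own argument in projective clothing: the paper's ``slice by a hyperplane off the vertex, find a point by a cubic, pass to the tangent hyperplane $\phi_1=0$, and find a common line of the cones $\phi_2'=0$, $\phi_3'=0$ via a sextic'' is exactly your ``find $[p]\in X$ by a cubic, restrict to $L=0$, and intersect the conic $Q=0$ with the cubic $K=0$ by B\'ezout.'' The degree bookkeeping (one cubic, then one sextic) matches the paper's, and your added remarks on smoothness and working projectively only tighten points the paper leaves implicit.
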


\begin{proof}
	Intersect our cone with a four-dimensional plane that does not pass through its top. The cone will cut out a three-dimensional cubic hypersurface on it. We find a point on this surface (for which it is enough to solve one equation of the third degree) and construct a three-dimensional hyperplane that is tangent to the surface at this point. If, after moving the point to the origin, the equation of the hypersurface has the form
	\begin{align*}
		\phi_1 + \phi_2 + \phi_3 = 0,
	\end{align*}
	
	\noindent
	then the equation of the tangent hyperplane will be
	\begin{align*}
		\phi_1=0.
	\end{align*}
	
	\noindent
	Consider the intersection of our surface with the given tangent hyperplane. Obviously, the equation of this intersection will have the form
	\begin{align*}
		\phi_2' + \phi_3' = 0
	\end{align*}
	
	\noindent
	where $\phi_2'$ and $\phi_3'$ are forms of second and third degree, respectively, in three variables. We consider the quadratic and cubic cones
	\begin{align*}
		\phi_2'=0 \text{ and } \phi_3'=0
	\end{align*}
	
	\noindent
	with a common vertex in three-dimensional space. These cones have a common straight line generator and it suffices to solve an equation of the sixth degree to find it (determine the intersection point of the quadric and cubic cut out by the cones on any two-dimensional plane not passing through their vertex and connect it to the vertex of the cones). The two-dimensional plane which passes through the [original] vertex and through the straight line just found lies entirely in the original cubic cone.
\end{proof}

\begin{theorem}
	The general algebraic equation of degree $n \geq 21$ admits an $(n-6)$-parameter resolvent.
\end{theorem}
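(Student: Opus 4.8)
The plan is to produce a Tschirnhaus transformation realizing the normal form (3) with $s=n-6$; that is, to find a point $(t_0,\dotsc,t_{n-2})\in T_{n-1}$ at which $C_1=C_2=C_3=C_4=C_5=0$, using only auxiliary equations that themselves admit $(n-6)$-parameter resolvents. Once such a point is in hand, the substitution $y=\sqrt[n]{C_n}\,z$ normalizes $C_n=1$ and leaves exactly the $n-6$ coefficients $C_6,\dotsc,C_{n-1}$ as free parameters, which is precisely form (3) with $s=n-6$. Geometrically I must exhibit a point common to the five hypersurfaces $C_1=0,\dotsc,C_5=0$. I will first locate a two-dimensional plane lying entirely in $C_1=0,C_2=0,C_3=0$, and then cut that plane with the two remaining hypersurfaces.

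For the plane I follow the reduction already set up in the introduction. After translating the origin to a common point of $C_1,C_2,C_3$ (solving equations of degrees $2$ and $3$) and passing to the space $T_{n-4}$ cut out by $C_1=0$ and the two tangent hyperplanes $\phi_1=0,\psi_1=0$, the first three surfaces become $C_2'=\phi_2'$ and $C_3'=\psi_2'+\psi_3'$. I apply the first lemma with $k=5$ to the two quadratic cones $\phi_2'=0$ and $\psi_2'=0$: since $\dim T_{n-4}=n-4\ge 15$ exactly when $n\ge 19$, I may restrict to a generic $15$-dimensional subspace through the vertex, on which the two quadrics remain honest $14$-dimensional cones, and obtain a common $5$-dimensional plane $L$. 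On $L$ the cubic $\psi_3'|_L=0$ is a cubic hypersurface cone in five-dimensional space, so the second lemma yields a two-dimensional plane $P\subseteq L$ through the vertex. By construction $\phi_2',\psi_2',\psi_3'$ all vanish on $P$, hence $P$ lies in $C_2'=0$ and $C_3'=0$, and since $P\subseteq T_{n-4}\subseteq\{C_1=0\}$ it lies in all of $C_1=C_2=C_3=0$.

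It remains to find a point of $P$ on which also $C_4=0$ and $C_5=0$. Restricted to $P$ these are curves of degrees $4$ and $5$; working projectively (this is the point flagged in the footnote), Bezout gives $4\cdot 5=20$ common points, so finding one amounts to solving a single auxiliary equation of degree $20$. For the recursion to close I need every auxiliary equation to admit an $(n-6)$-parameter resolvent. The equations of degrees $2,3,4,6$ arising in the translation and in the two lemmas are trivially resolvents of themselves on $d-1\le 5$ parameters, which is $\le n-6$ for $n\ge 21$. The degree-$20$ equation is not of degree $\le n-5$ when $n=21$, so triviality is not enough; instead I invoke Wiman's theorem, by which a general equation of degree $20\ge 10$ admits a $(20-5)=15$-parameter resolvent. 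This fits inside the budget precisely when $15\le n-6$, i.e. $n\ge 21$, which is exactly the hypothesis.

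The conceptual core of the argument is the choice $k=5$: it is the unique dimension making the two lemmas dovetail, giving a common $5$-plane of two quadric cones on which a cubic cone still contains a $2$-plane, and it forces the ambient bound $n\ge 19$ for the plane to exist. The \emph{binding} constraint $n\ge 21$, however, comes entirely from the last step: the degree-$20$ intersection equation must be solved, and it is Wiman's $15$-parameter resolvent for it, rather than the plane construction, that sets the threshold. The points I expect to require the most care are therefore (i) checking that the restriction in the first lemma to a $15$-dimensional subspace keeps the two quadrics in sufficiently general position for the lemma to apply, and (ii) the degree count $4\cdot 5=20$ together with the insistence on the projective rather than affine intersection, so that Wiman's bound may legitimately be charged against the parameter budget.
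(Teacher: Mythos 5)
Your proposal is correct and follows essentially the same route as the paper: translate to a common point of $C_1=C_2=C_3=0$, pass to $T_{n-4}$, apply the first lemma with $k=5$ (requiring $n-4\ge 15$) to get a common $5$-plane of the two quadric cones, apply the second lemma to extract a $2$-plane inside the cubic cone, and then meet $C_4=0$, $C_5=0$ on that plane via a degree-$20$ equation handled by Wiman's $15$-parameter resolvent, which forces $n-6\ge 15$, i.e.\ $n\ge 21$. Your added remarks on restricting to a generic $15$-dimensional subspace and on the projective Bezout count are reasonable elaborations of steps the paper leaves implicit.
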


\begin{proof}
	As above, consider the hypersurfaces
	\begin{align*}
		C_1=0, \ C_2=0, \ C_3=0, \ C_4=0, \ C_5=0
	\end{align*}
	
	\noindent
	in the space $T_{n-1}$. We move the origin to a common point of the hypersurfaces $C_1=0, \ C_2=0, \ C_3=0$. We construct tangent hyperplanes to the hypersurfaces $C_2=0$ and $C_3=0$ at the origin and consider the space $T_{n-4}$, which is the intersection of these hyperplanes with the hyperplane $C_1=0$. In $T_{n-4}$, our surfaces $C_2'=0$ and $C_3'=0$ are defined by equations of the form
	\begin{align*}
		0 &= C_2' = \phi_2,\\
		0 &= C_3' = \psi_2 + \psi_3.
	\end{align*}
	
	\noindent
	By virtue of Lemma 1 and as $n-4 \geq 15$, the two cones
	\begin{align*}
		\phi_2=0, \psi_2=0
	\end{align*}
	
	\noindent
	have a common 5-dimensional linear subspace. According to Lemma 2, the cubic cone $\psi_3'=0$ in this subspace contains a two-dimensional plane and according to the above, we do not need to solve any equations above the sixth degree to find one. \\
	
	 The hypersurfaces
	 \begin{align*}
	 	C_4=0 \text{ and } C_5=0
	 \end{align*}
	 
	 \noindent 
	 cut out curves of the $4^{th}$ and $5^{th}$ degree are cut out on this plane, the intersection point of which can be found by solving an equation of the $20^{th}$ degree, which, according to Wiman, has a resolvent that depends on no more than 15 parameters. However, $n-6 \geq 15$, which proves the theorem. 
\end{proof}

This technique allows us to state the existence of $(n-7)$-parameter resolvents of a general equation of degree $n \geq 121$.

\subsection*{Literature}
(1) D. Hilbert. \"Uber die Gleichung neunten Grades. Ges. Abh., Bd. II, S. 393.\\
\hspace{-14pt} (2) A. Wiman. \"Uber die Anwendung der Tschirnhausentransformationen auf die Reduktion algebraischer Gleichungen. Nova Acta Regiae Societatis Scientiarum Upsaliensis, volumen extra ordinem 1927. \\
\\
Department of Algebra \hfill Received January 19, 1953.

%\printbibliography

\end{document}